\newtheorem{thm}{Theorem}[section]
\newtheorem{lem}[thm]{Lemma}
\newtheorem{mainthm}[thm]{Main Theorem}
\theoremstyle{definition}
\newtheorem{defin}[thm]{Definition}
\newtheorem{exa}[thm]{Example}
\newtheorem*{xrem}{Remark}
\numberwithin{equation}{section}
\begin{document}
\title[An Infinitude of Primes of the Form $b^2  + 1$]{An Infinitude of Primes of the Form $b^2  + 1$}

\author[K. Slinker]{Kent Slinker}
\address{Pima College\\RH 150\\1255 North Stone Ave.\\
Tucson, AZ 85709-3000\\USA}
\email{kslinker@pima.edu, http://kslinker.com/bsquared+1.html}

\date{August 16, 2009, revised July 26, 2010}

\begin{abstract}
If $b^2  + 1$ is prime then \textit{b} must be even, hence we examine the form $4u^2  + 1$. Rather than study primes of this form we study composites where the main theorem of this paper establishes that if $4u^2  + 1$ is composite, then \textit{u} belongs to a set whose elements are those \textit{u} such that $u^2  + t^2  = n(n + 1)$, where \textit{t} has a upper bound determined by the value of \textit{u}. This connects the composites of the form $4u^2  + 1$ with numbers expressible as the sum of two squares equal to the product of two consecutive integers. A result obtained by Gauss concerning the average number of representations of a number as the sum of two squares is then used to show that an infinite sequence of \textit{u} for which $u^2  + t^2  = n(n + 1)$ is impossible. This entails the impossibility of an infinite sequence of composites, and hence an infinitude of primes of the form $b^2  + 1$.\\
\end{abstract}
\subjclass[2000]{Primary 11A41; Secondary 11D45}

\keywords{Sums of squares, products of consecutive integers, twice triangular numbers,  Gauss circle problem, infinitude of primes, average order of sums of two squares}

\maketitle

\section{Introduction}
Are there an infinitude of primes of the form $b^2  + 1$? Hardy and Wright pose this question in Chapter 2 of their book, \textit{Introduction to the Theory of Numbers}. This question is also listed as the first problem found in Richard Guy's \textit{Unsolved Problems of Number Theory}, and is mentioned in numerous books on elementary number theory.  

Primes of the form $4m + 1$ make up one of two classes of all primes, (the other class being those of the form $4m + 3$). The fascination associated with the distribution of primes extends naturally to the distribution of \textit{m} which make up the set of those $4m + 1$ which are prime. The following is a list of both \textit{m} and $4m + 1$ for primes up to 401:\\

(\textbf{1},5), (3,13), (\textbf{4},17), (7,29), (\textbf{9},37), (10,41), (13,53), (15,61), (18,73), (22,89), (24,97), (\textbf{25},101), (27,109), (28,113), (34,137), (37,149), (39,157), (43,173), (45,181), (48,193), (\textbf{49},197), (57,229), (58,233), (60,241), (\textbf{64},257), (67,269), (69,277), (70,281), (73,293), (78,313), (79,317), (84,337), (87,349), (88,353), (93,373), (97,389), (99,397), (\textbf{100},401)\\

In boldface type are those \textit{m} which are also perfect squares.  This observation leads to the question posed by Hardy and Wright \textit{et al} concerning the infinitude of primes of the form $b^2  + 1$. Since all primes other than 2 are odd, if $b^2  + 1$ is to be prime, then \textit{b} must be even. Letting $b = 2u$ gives $4u^2  + 1$ which is the form we will examine.

We will begin by studying \textit{composites} of the form $4u^2  + 1$.  The first part of this paper will consist of proving a theorem that states for all composite numbers of the form $4u^2  + 1$, there is a \textit{t} such that $u^2  + t^2 $ is the product of consecutive integers, and \textit{t} is less than or equal to  $\frac{{u^2  - 6}}{5}$. 

For example, 65 is a composite of the form $4u^2  + 1$ where $u = 4$.  We now pose the question: is there a number \textit{t}  such that $4^2  + t^2  = n(n + 1)$ for some \textit{n} and, if yes, is $\textit{t}\leq\frac{4^2-6}{5}$? The answer is yes, $4^2+2^2=20=4(5)$ and $2 \le {{4^2  - 6} \over 5} = 2$

Another example:  145 is a composite of the form $4u^2  + 1$ where $u = 6$.  The question is: is there a number \textit{t}  such that $6^2  + t^2  = n(n + 1)$ for some \textit{n} and is $\textit{t} \le {{4^2  - 6} \over 5}$?  The answer is yes, $6^2  + 6^2  = 72 = 8(9)$ and $6 \le {{6^2  - 6} \over 5} = 6$.  

Let us form a set of all numbers \textit{u}, such that if there is a \textit{t} where 
$u^2  + t^2$ is the product of consecutive integers, and \textit{t} is less than or equal to the integral part of  ${{u^2  - 6} \over 5}$, then \textit{u} is a member of our set. Let us call this set \textbf{S}.  

We can formally define this set as follows:

\begin{defin}
Where  $\{ u,t,n\}  \in {\rm{\textbf{N}}}$ with ${\rm{ \textit{t}}} \le{{{u^2  - 6} \over 5}}$ then:
\begin{equation}
\textbf{S} = \left\{u:u^2  + t^2  = n(n + 1) \right\}
\end{equation}
\end{defin}

\begin{exa}
Consider $11^2  + 23^2  = 25(26)$, since \textit{u} is an element of \textbf{S} provided that $t \le {{{u^2  - 6} \over 5}}$, 
and $\frac{{11^2  - 6}}{5} = 23$
, 11 is an element of \textbf{S}.
\end{exa}

\begin{exa}
The law of commutativity in Example 1.2 allows us to exchange the value of \textit{u} with \textit{t}, to obtain $23^2  + 11^2  = 25(26)$.  Again, \textit{u} is an element of \textbf{S} provided that $\textit{t} \le {{{u^2  - 6} \over 5}}$, and ${{{23^2  - 6} \over 5}} \approx 104$, hence 23 is also an element of \textbf{S}.
\end{exa}

\begin{exa}
Now consider $5^2  + 25^2  = 25(26)$. However, in this case ${{{5^2  - 6} \over 5}} = 3$, since 25 is not less than or equal to 3,  then $5 \notin \textbf{S}$. 
However, if we use the law of commutativity and let $u = 25$ and $t = 5$, we have:
$25^2  + 5^2  = 25(26)$  and in this case $
{{{25^2  - 6} \over 5}} \approx 123
$, and $
{\rm{5}} \le {\rm{123}}
$, so $25 \in \textbf{S}$.
\end{exa}
From the above three examples, we can conclude that: $4\left( {11} \right)^2  + 1$, $4\left( {23} \right)^2  + 1$, and $4\left( {25} \right)^2  + 1$ are all composite. Indeed, $4\left( {11} \right)^2  + 1 = (5)(97)$, $4\left( {23} \right)^2  + 1 = (29)(73)$, and $4\left( {25} \right)^2  + 1 = (41)(61)$.

Our first step will be to prove that for all composite $4u^2  + 1$, $u \in \textbf{S}$.
We will do so by assuming $4u^2  + 1$ is composite, and prove the individual requirements for set membership in \textbf{S} given by Definition 1.1. 
							
We consider two exhaustive cases: $4u^2  + 1$ is the product of two factors (not necessarily prime) congruent to 1 (mod 4) or congruent to 3 (mod 4). \\
\textbf{
\begin{center}
Case I: \textbf{$4u^2  + 1$} is the product of 2 factors congruent to 1 (mod 4)
\end{center}
}

Assuming $4u^2  + 1$  is composite we have: 
\begin{defin}
$4u^2  + 1 = (4m + 1)(4k + 1)$ 
\end{defin}
We now note that $m \ne k$, since if they were equal, then we would have a perfect square, which is impossible, as  $4u^2  + 1$   is one greater than a perfect square. Since  $m \ne k$, one must be greater than the other, so we will stipulate that $m < k$		

Since $m < k$, we designate its difference as \textit{t}.
\begin{defin}
$k = t + m$
\end{defin}
We now substitute $k = t + m$ into $4u^2  + 1 = (4m + 1)(4k + 1)$ and solve for \textit{m} using the quadratic formula to obtain: 
\begin{equation}
m = \frac{1}{4}\left( {\sqrt {1 + 4(u^2  + t^2 )}  - (2t + 1)} \right)
\end{equation}
Since \textit{m} is an integer, and $2t + 1$ is odd, then $\sqrt {1 + 4(u^2  + t^2 )} $ \textit{must be odd, and an integer whose difference with} $2t + 1$  \textit{is congruent to 0 (mod 4)}.  We will prove the mod 4 congruency after Lemma 1.9\\

Since $\sqrt {1 + 4(u^2  + t^2 )}$ must be odd, let  $2n + 1 = \sqrt {1 + 4(u^2  + t^2 )} $.
Squaring both sides gives $4n^2  + 4n + 1 = 1 + 4(u^2  + t^2 )$ which simplifies to:

\begin{equation}
u^2  + t^2  = n(n + 1)
\end{equation}
Equation 1.3 establishes the main condition for set membership in \textbf{S}.						
In order to prove the mod 4 congruency mentioned above, we must first establish some lemmas concerning the possible values of \textit{n} in Equation 1.3

\begin{lem}
u and t have the same parity 
\end{lem}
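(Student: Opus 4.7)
The plan is to extract this parity fact directly from Equation~(1.3), namely $u^2 + t^2 = n(n+1)$, which was just derived in Case~I. The key observation is that the right-hand side is automatically even: among any two consecutive integers $n$ and $n+1$, exactly one is even, so $n(n+1) \equiv 0 \pmod{2}$. Transferring this to the left-hand side, we obtain $u^2 + t^2 \equiv 0 \pmod{2}$.

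From here the conclusion follows in one further step. Since $a^2 \equiv a \pmod{2}$ for every integer $a$ (because $a^2 - a = a(a-1)$ is a product of consecutive integers), reducing $u^2 + t^2 \equiv 0 \pmod{2}$ modulo $2$ gives $u + t \equiv 0 \pmod{2}$. This is equivalent to saying that $u$ and $t$ are either both even or both odd, which is exactly the statement of the lemma.

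I do not anticipate any serious obstacle here. The whole argument is a two-line parity computation starting from (1.3), invoking only the elementary facts that consecutive integers have opposite parities and that squaring preserves parity modulo~$2$. The one point worth flagging is that the proof relies solely on the identity (1.3) and not on the finer quadratic-formula derivation that produced it; in particular, it uses nothing about the factorization $4u^2 + 1 = (4m+1)(4k+1)$ beyond what is already encoded in the relation $u^2 + t^2 = n(n+1)$. This keeps the lemma genuinely preliminary, so it can be invoked freely when establishing the subsequent mod~$4$ congruency promised in the paragraph following Equation~(1.2).
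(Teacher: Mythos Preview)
Your argument is correct and matches the paper's own proof: both observe that $n(n+1)$ is even (as a product of consecutive integers), so $u^2+t^2$ is even, forcing $u$ and $t$ to share parity. Your extra line invoking $a^2\equiv a\pmod 2$ just makes explicit the step the paper leaves implicit.
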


\begin{proof}
Examining Equation 1.3, $u^2  + t^2  = n(n + 1)$, we notice that the right hand side is always even because either \textit{n} or $n + 1$ is even, hence both \textit{u} and \textit{t} must have the same parity.
\end{proof}

\begin{lem}
Neither \textit{n} or $(n + 1)$ can be congruent to 3 (mod 4)				
\end{lem}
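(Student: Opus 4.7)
The plan is to apply the classical sum-of-two-squares characterization: a positive integer $N$ can be written as $u^2 + t^2$ only if every prime $p \equiv 3 \pmod{4}$ appears to an even power in the prime factorization of $N$. Since Equation 1.3 asserts $u^2+t^2 = n(n+1)$, the integer $n(n+1)$ must satisfy this criterion.

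First I would argue by contradiction and assume $n \equiv 3 \pmod{4}$; the case $n+1 \equiv 3 \pmod{4}$ is handled identically after renaming. Because $n$ is then odd, its prime factorization involves only odd primes, each congruent to $1$ or $3 \pmod{4}$. Writing $n = \prod p_i^{a_i}\cdot \prod q_j^{b_j}$ with $p_i \equiv 1$ and $q_j \equiv 3 \pmod 4$, reduction modulo $4$ gives
\begin{equation*}
n \equiv (-1)^{b_1 + b_2 + \cdots + b_s} \pmod{4}.
\end{equation*}
The assumption $n \equiv 3 \equiv -1 \pmod 4$ forces $b_1 + \cdots + b_s$ to be odd, and hence at least one exponent $b_j$ is odd. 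Thus some prime $q \equiv 3 \pmod{4}$ divides $n$ to an odd power.

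Next I would use $\gcd(n,n+1) = 1$ to observe that $q \nmid (n+1)$, so the exponent of $q$ in $n(n+1)$ equals its exponent in $n$ and remains odd. But then $n(n+1) = u^2 + t^2$ has a prime factor $\equiv 3 \pmod{4}$ occurring to an odd power, contradicting the sum-of-two-squares criterion. This contradiction rules out $n \equiv 3 \pmod{4}$, and the symmetric argument applied to $n+1$ rules out $n+1 \equiv 3 \pmod{4}$, giving the lemma.

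The bulk of the proof is essentially bookkeeping modulo $4$; the only real content is invoking the sum-of-two-squares theorem, which is classical. The main point requiring care is the coprimality step: without $\gcd(n,n+1)=1$, a prime $q\equiv 3\pmod 4$ of odd multiplicity in $n$ could conceivably combine with a factor of $q$ in $n+1$ to give even total multiplicity in the product. Since consecutive integers are coprime, this concern does not arise, and the argument goes through cleanly.
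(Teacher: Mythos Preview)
Your proof is correct and follows essentially the same approach as the paper: invoke the sum-of-two-squares criterion (primes $\equiv 3 \pmod 4$ must occur to even powers), observe that $n \equiv 3 \pmod 4$ forces some such prime to occur to an odd power in $n$, and use $\gcd(n,n+1)=1$ to conclude that this odd exponent survives in the product $n(n+1)$. The only difference is that you spell out explicitly, via the computation $n \equiv (-1)^{\sum b_j} \pmod 4$, why $n \equiv 3 \pmod 4$ guarantees an odd exponent on some prime $q \equiv 3 \pmod 4$; the paper simply asserts this step.
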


\begin{proof}
Fermat proved that a number can be expressed as the sum of two squares only if its primes congruent to 3 (mod 4) are raised to even powers.
Since \textit{n} and $(n + 1)$  are relatively prime, they share no common factors (any factors which divide one will have a remainder of either 1 or -1  when they divide the other).  
If either \textit{n} or $(n + 1)$  is congruent to 3 (mod 4), then it has at least one factor congruent to 3 (mod 4) which is raised to an odd power. The only way that an odd powered factor could become even and thus make $n(n + 1)$ expressible as the sum of two squares is if it gains an odd prime power from \textit{n} or $(n + 1)$  . But this is impossible, since \textit{n} and $(n + 1)$   are relatively prime. Hence neither \textit{n} or $(n + 1)$   can be congruent to 3 (mod 4).
\end{proof}

\begin{lem}
If \textit{u} and \textit{t} are odd, then \textit{n} must be odd and congruent to 1 (mod 4) 
\end{lem}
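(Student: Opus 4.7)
The plan is to work modulo $4$ and reduce everything to the constraint already given in Equation 1.3, using Lemma 1.8 to rule out one case. I would not introduce any new machinery; the whole argument is a short parity/residue calculation.

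First I would compute the left-hand side of $u^2 + t^2 = n(n+1)$ modulo $4$ under the hypothesis that $u$ and $t$ are both odd. Since any odd square is congruent to $1 \pmod 4$, I get $u^2 + t^2 \equiv 2 \pmod 4$. Therefore the product $n(n+1)$ must itself be $\equiv 2 \pmod 4$. Of the two consecutive integers $n$ and $n+1$, exactly one is even; for their product to be $\equiv 2 \pmod 4$, that even factor must be $\equiv 2 \pmod 4$ rather than $\equiv 0 \pmod 4$.

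Next I would split into the two possibilities for the parity of $n$. If $n$ were even, then the parity condition above would force $n \equiv 2 \pmod 4$, which makes $n+1 \equiv 3 \pmod 4$; but this is ruled out by Lemma 1.8. Hence $n$ must be odd, and then $n+1$ is the even factor, so $n+1 \equiv 2 \pmod 4$, giving $n \equiv 1 \pmod 4$, which is exactly the conclusion of the lemma. A quick sanity check against Lemma 1.8 (which also forbids $n \equiv 3 \pmod 4$) confirms consistency.

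There is no real obstacle here; the only place where one has to be careful is in the case analysis ensuring that the even member of $\{n, n+1\}$ is exactly $\equiv 2 \pmod 4$, and then invoking Lemma 1.8 to kill the case in which $n$ itself is that even member. Once that is observed, the conclusion that $n$ is odd and $n \equiv 1 \pmod 4$ follows immediately.
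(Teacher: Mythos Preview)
Your proposal is correct and follows essentially the same route as the paper: compute $u^2+t^2\equiv 2\pmod 4$, deduce the even member of $\{n,n+1\}$ is $\equiv 2\pmod 4$, and use Lemma~1.8 to eliminate the case $n\equiv 2\pmod 4$. The paper's version differs only cosmetically, writing $n^2+n$ in place of $n(n+1)$ and rearranging to $2-n\equiv n^2\pmod 4$ before the case split.
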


\begin{proof}
We expand the right hand side of  Equation 1.3 and analyze mod 4:\\

$u^2  + t^2  \equiv n^2  + n{\text{   (mod 4)}}$\\

If  \textit{u} and \textit{t} are odd, we have:\\

$2 \equiv n^2  + n{\text{   (mod 4)}}$ \\

Rearranging terms we have:\\

$2 - n \equiv n^2 {\text{  (mod 4)}}$\\

If \textit{n} is even, then we have: $2 - n \equiv 0{\text{  (mod 4)}}$, which means \textit{n} must be congruent to 2 (mod 4) and $(n + 1) \equiv 3{\text{ (mod 4)}}$, which is impossible by Lemma 1.8\\

Letting \textit{n} be odd we have: \\

$2 - n \equiv 1{\text{  (mod 4)}}$ \\

Hence \textit{n} must be congruent to 1 (mod 4) if \textit{u} and \textit{t} are odd.
\end{proof}

It follows immediately from Lemmas 1.8 and 1.9  that:

\begin{lem}
If u and t are even, then n must be even and congruent to 0 (mod 4) 			
\end{lem}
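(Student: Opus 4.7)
The plan is to run the same modulo $4$ reduction used in the proof of Lemma 1.9, but now under the opposite parity hypothesis on $u$ and $t$, and then apply Lemma 1.8 to rule out the unwanted residue class for $n$.

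First I would start from Equation 1.3 and reduce modulo $4$, obtaining $u^2 + t^2 \equiv n^2 + n \pmod 4$. With $u$ and $t$ both even, the left-hand side vanishes modulo $4$, so $n(n+1) \equiv 0 \pmod 4$. Because $n$ and $n+1$ are consecutive integers, exactly one of them is even, and in order for their product to be divisible by $4$ that even factor must itself be a multiple of $4$. This already restricts $n$ modulo $4$ to the two residue classes $0$ and $3$.

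Next, I would invoke Lemma 1.8 to eliminate $n \equiv 3 \pmod 4$ outright. The only surviving possibility is $n \equiv 0 \pmod 4$, which in particular forces $n$ to be even and establishes exactly the claim of the lemma.

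I do not anticipate any genuine obstacle: the entire argument is a short parity variant of the proof of Lemma 1.9, with the single extra ingredient being the appeal to Lemma 1.8 to discard the class $n \equiv 3 \pmod 4$, which is why the statement can be advertised as an immediate consequence of the two preceding lemmas. If a more symmetric presentation is preferred, one may instead note that Lemma 1.8 restricts $n \pmod 4$ to $\{0,1\}$ from the outset, and that the even-parity computation $n(n+1) \equiv 0 \pmod 4$ then excludes the remaining case $n \equiv 1 \pmod 4$.
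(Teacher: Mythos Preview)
Your proposal is correct and is exactly the argument the paper has in mind: the paper gives no separate proof but simply declares the lemma an immediate consequence of Lemmas~1.8 and~1.9, meaning one reruns the modulo~$4$ computation of Lemma~1.9 with $u,t$ even and then invokes Lemma~1.8 to discard the residue class $n\equiv 3\pmod 4$. Your alternative phrasing (first use Lemma~1.8 to restrict $n\pmod 4$ to $\{0,1\}$, then use $n(n+1)\equiv 0\pmod 4$ to exclude $n\equiv 1$) is equally valid and arguably cleaner.
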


We can now prove that $2n + 1 - \left( {2t + 1} \right) \equiv 0{\text{ (mod 4)}}$

\begin{proof}
Let \textit{t} be odd: $t = 2w + 1$. From Lemma 1.9, \textit{n} must be odd and congruent to 1 (mod 4).  Let $n = 4v + 1$, so:\\

$2n + 1 - \left( {2t + 1} \right) = 2(n - t)
$, but $t = 2w + 1$ and $n = 4v + 1$, so:\\

$2(n - t) = 2(4v + 1 + 2w + 1)$\\

$2(4v + 1 + 2w + 1) = 4(2v + w + 1)$\\

$4(2v + w + 1) \equiv 0{\text{ (mod 4)}}$\\

Now let \textit{t} be even: $t = 2w$. From Lemma 1.10, \textit{n} must be even and congruent to 0 (mod 4). Hence, $n = 4v$, so:\\

$2n + 1 - \left( {2t + 1} \right) = 2(n - t)$, but $t = 2w$ and $n = 4v$, so:\\

$2(n - t) = 2(4v + 2w)$\\

$2(4v + 2w) = 4(2v + w)$\\

$4(2v + w) \equiv 0{\text{ (mod 4)}}$\\
\end{proof}

We now turn to Case II, where $4u^2  + 1$ is the product of two factors congruent to 3 (mod 4)\\

\begin{center}
\textbf{Case II: \textbf{$4u^2  + 1$} is the product of 2 factors congruent to 3 (mod 4)}
\end{center}

We proceed exactly as in Case I above. However, the end result will be that Case II is impossible. 
\begin{defin}
$4u^2  + 1 = (4m + 3)(4k + 3)$
\end{defin}
Again, $m \ne k$, and we stipulate $m < k$.
We let $k = t + m$, and solve $4u^2  + 1 = (4m + 3)(4k + 3)$ for \textit{m} after replacing $k = t + m$. This gives:

\begin{equation}
m = \frac{1}
{4}\left( {\sqrt {1 + 4(u^2  + t^2 )}  - (2t + 3)} \right)
\end{equation}

Again, since \textit{m} is an integer, and $2t + 3$ is odd, then $\sqrt {1 + 4(u^2  + t^2 )} $ \textit{must be odd, and an integer whose difference with} $2t + 3$ \textit{is congruent to 0 (mod 4)}.\\

Let  $2n + 1 = \sqrt {1 + 4(u^2  + t^2 )}$. Squaring both sides gives $4n^2  + 4n + 1 = 1 + 4(u^2  + t^2 )$, which simplifies to $u^2  + t^2  = n(n + 1)$, the same as Case I.\\

We can now prove that $2n + 1 - (2t + 3) \not\equiv 0{\text{ (mod 4)}}$

\begin{proof}
Let \textit{t} be odd: $t = 2w + 1$. From Lemma 1.9, \textit{n} must be odd and congruent to 1 (mod 4).  Let $n = 4v + 1$, so:\\

$2n + 1 - (2t + 3) = 2(n - t - 1)$, but $t = 2w + 1$ and $n = 4v + 1$, so:\\

$2(n - t - 1) = 2(4v + 1 - (2w + 1) - 1)$\\

$2(4v + 1 - (2w + 1) - 1) = 2(4v + 1 - 2w - 1 - 1)$\\

$2(4v + 1 - 2w - 1 - 1) \equiv 2(4v - 2w - 1)$\\

$ 2(4v - 2w - 1) = 8v - 4w - 2$ \\

$8v - 4w - 2 \equiv  - 2{\text{ (mod 4)}}$\\

Hence $2n + 1 - (2t + 3) \not\equiv 0{\text{ (mod 4)}}$ with \textit{t} odd. \\

Now let \textit{t} be even: $t = 2w$. From Lemma 1.10, \textit{n} must be even and congruent to 0 (mod 4). Hence, $n = 4v$, so:\\

$2n + 1 - (2t + 3) = 2(n - t - 1)$, but $t = 2w$ and $n = 4v$, so:\\

$2(n - t - 1) = 2(4v - 2w - 1)$ so:\\

$2(4v - 2w - 1) = 8v - 4w - 2$\\
 
$8v - 4w - 2 \equiv  - 2{\text{ (mod 4)}}$\\

Hence $2n + 1 - (2t + 3) \not\equiv 0{\text{ (mod 4)}}$ with \textit{t} even.\\

Since \textit{t} must be even or odd, and in neither case is $2n + 1 - \left( {2t + 3} \right) \equiv 0{\text{ (mod 4)}}$, then Case II is impossible, which means that $4u^2  + 1$ cannot be the product of two factors congruent to 3 (mod 4).

\end{proof}

\textbf{The value of \textbf{$t_{\max} $}}

We now establish the upper limit for \textit{t} given in the definition of set \textbf{S}.
We note that:\\

$4u^2  + 1 = (4m + 1)(4k + 1) \to u^2  = k(4m + 1) + m$\\

First we establish the maximum value of \textit{k}, which we designate as $k_{\max } $.
Since 5 is the first prime congruent to 1 (mod 4), the least possible value of \textit{m} is 1.  Since the maximum value of $4k + 1$ is obtained when the value $4m + 1$ is at its minimum, the maximum value ($k_{\max } $) is given by setting $m = 1$, this gives $u^2  = k_{\max } (5) + 1$, rearranging terms we have:

\begin{equation}
k_{\max }  = \frac{{u^2  - 1}}{5}			
\end{equation}

Since $t = k - m$, we obtain by substitution of $k_{\max } $ and $m = 1$:

\begin{equation}
t_{\max }  = \frac{{u^2  - 6}}{5}
\end{equation}\\

This establishes the upper limit for \textit{t} found in the universe of discourse for the set \textbf{S}.\\ 

\textbf{The value of $t_{\max } $ prevents the trivial case where $t = u^2$}\\

Consider the case where $t = u^2 $. This allows us to write:\\

$u^2  + t^2  \to u^2  + (u^2 )^2  = u^2 (u^2  + 1)$\\

Substitution of $u^2 = n$ gives the tautology $n(n+1) = n(n + 1)$. If this case were allowed, then every \textit{u} would be an element of \textbf{S} by selecting a value of $t = u^2 $. However, the value of $t_{\max } $ prevents this since $
u^2  >  {{{u^2  - 6} \over 5}} = t_{\max } 
$\\

We now establish the main theorem of this paper, which connects all composite instances of $4u^2  + 1$ with the sum of two squares equal to the product of two consecutive integers.\\

\begin{mainthm}
$4u^2  + 1$ is composite if and only if $u \in \textbf{S}$
\end{mainthm}

\begin{proof}
Suppose to the contrary that $u \notin \textbf{S}$, but $4u^2  + 1$ is composite. However, if $4u^2  + 1$ is composite then by stipulation one factor is given as $4m + 1$, hence \textit{m} must exist, Equation 1.6 sets the limit for $t_{\max} $ provided \textit{m} exists and equations 1.2 and 1.3 entail that $u \in \textbf{S}$, which contradicts our original assumption that $u \notin \textbf{S}$.

\end{proof}

\begin{xrem}
It might be correctly stated that the open question concerning the finitude of primes of the form $4u^2  + 1$ is due to the fact that we have a squared term. Our above analysis takes advantage of this by assuming $4u^2  + 1 = (4m + 1)(4k + 1)$ and noting that $m \ne k$. This leads to the fruitful result obtained by letting $k = t + m$ and results in the elements which define the set \textbf{S}.  Additionally the move from $b^2  + 1$ to $4u^2  + 1$ sets up a one-to-one mapping of \textit{u} to the number line via the elements of \textbf{S}, which sets the stage for the analysis which follows below. 
\end{xrem}

\section{The average order of the sum of two squares and the infinitude of primes of the form $4u^2  + 1$.}

\begin{exa}
We explore the implications of Theorem 1.12 by an example.
Consider the following  products of consecutive integers: $ (40)(41) $, $ (52)(53) $, $ (232)(233) $.

Each can be represented as a sum of two squares in the first octant in two distinct ways, namely:\\

$(40)(41) = 14^2  + 48^2 $, $22^2  + 34^2 $

$(52)(53) = 16^2  + 50^2 $, $30^2  + 34^2 $

$(232)(233) = 34^2  + 230^2 $, $134^2  + 190^2 $
\\

Hence, by Theorem 1.12 (plus commutativity and the fact the the maximum value of $t$ is not exceeded in each case, review Examples 1.3 and 1.4 above), when $u =${$14$, $48$, $22$, $34$, $16$, $50$, $30$, $230$, $134$, and $190$}, then $4u^2  + 1$ is composite for each $u$ in that list.  Observe the element $u = 34 $ appears  as one of the two squares in each representation of $ (40)(41) $, $ (52)(53) $, and $ (232)(233) $, hence of the total \textit{12 possible additions} to the set \textbf{S} from the \textit{6 different} representations of $n(n+1) $ as a sum of two squares \textit{only 10} elements are added to the set \textbf{S}.

\end{exa}

Since the set \textbf{S} contains all composite solutions to $4u^2  + 1$, the \textit{finitude} of primes of the form $4u^2  + 1$  entails that there must be an infinite series of successive integers which are all elements of \textbf{S}. In the language of set theory, this means that there must be a subset of \textbf{S}, which we will designate as \textbf{L}, with a least member which we will designate as $x_0 $,  such that every integer $ \geq  x_0 $ is contained in \textbf{L}. Stated another way, for \textit{every integer} $x \geq  x_0 $, there is a $u=x$ such that $u^2  + t^2  = n(n + 1)$.

We now formally define the set \textbf{L}:\\

Where $\textbf{L} \subset \textbf{S}$ and $x_0$  is the least member of \textbf{L} and $x \in \textbf{N}$, then:

\begin{defin}
$\textbf{L} = \left\{ {x:x \geq  x_0 } \right\}$
\end{defin}

We now prove that \textbf{L} cannot exist (we follow Grosswald [2] closely for what follows with respect to the average order of the sum of two squares, see also Hardy [5]). Our principle result will show that the main theorem of this paper, Gauss's proof concerning the average number of ways $m$ can be written as the sum of two squares as $m \to \infty $, and an infinite sequence of composites of the form $4u^2  + 1$ are inconsistent.\\

The average order for the sum of two squares was shown by Gauss to be $\pi  + \varepsilon $, where the determination of $\varepsilon  \ll 1$ constitutes what is known as the Gauss circle problem.  
As is customary we will denote $r_2 (m)$ as the number of ways \textit{m} can be expressed as the sum of 2 squares, and let $\sum\limits_{m < d} {r_2 } (m) = A(d)$ be the total number of such representations for all numbers $ < d$. The customary treatment of $r_2 (m)$ ignores the law of commutativity and signs such that:

\[\overbrace {{\text{quadrant 1}}}^{\left\{ {{\text{(x, y), (y, x)}}} \right\}}{\text{  }}\overbrace {{\text{quadrant 2}}}^{\left\{ {{\text{( - x, y), ( - y, x)}}} \right\}}{\text{  }}\overbrace {{\text{quadrant 3}}}^{\left\{ {{\text{( - x,  - y), ( - y,  - x)}}} \right\}}{\text{  }}\overbrace {{\text{quadrant 4}}}^{\left\{ {{\text{(x,  - y), (y,  - x)}}} \right\}}{\text{ }}\]
\\

are counted as 8 distinct representations.  Hence $\frac{{A(d)}}
{d}$ is the average number of all such representations up to \textit{d}, $\frac{{A(d)}}
{{4d}}$ is the average number of such representations in the first quadrant, and $\frac{{A(d)}}
{{8d}}$ is the average number of \textit{individual representations}. 
Now $\frac{{A(d)}}
{d} = \pi  + \varepsilon $, so $\frac{{A(d)}}
{{8d}} = \frac{{\pi  + \varepsilon }}
{8}$. Since $\varepsilon  \ll 1$ it can be ignored, since it must be greater than $4 - \pi $ to affect our proof, which is not the case.\footnote{In other words, ${1 \over 2} - {{\pi  + \varepsilon } \over 8} = 0$ gives $4 - \pi  = \varepsilon $. Hardy and Wright give $
\varepsilon  = O\left( {{{\sqrt d } \over d}} \right)
$
, which has been greatly improved since.}  Hence we just write $\frac{{A(d)}}
{8} = \frac{\pi }
{8}$.\\

We now use this result to prove the impossibility of \textbf{L}.
\begin{proof}
Recall Definition 2.1, $\textbf{L} = \left\{ {x:x \geq x_0 } \right\}$ where $\textbf{L} \subset \textbf{S}$. 
Let us assume the \textit{minimal} requirement to establish an infinite sequence of composites of the form $4u^2  + 1$. For every $u^2  + t^2  = n(n + 1)$, both \textit{u} and \textit{t} would be, under this assumption, \textit{unique} elements of \textbf{L}. In other words the minimal case assumes there are no  instances like $19^2  + 17^2  = 25(26)$ and $19^2  + 71^2  = 73(74)$ which contribute only \textit{three} members to \textbf{S} (recall Example 2.1 above).
The minimal existence of \textbf{L}, in turn requires that the ratio between individual representations of $n(n+1)$ as the sum of two squares and the natural numbers greater than $x_0 $ be \textit{at the bare minimum} 1 to 2. But this ratio is impossible \textit {even for the set of all integers} expressible as the sum of two squares, much less those equal to $n(n+1)$, which are indeed a sparse subset of all integers! The average number of representations for all integers expressible as the sum of two squares in the first octant as we have seen is ${\pi  \over 8}$ which is less than the required ratio of ${1 \over 2}$ for \textbf{L}, which itself is a subset of all numbers expressible as the sum of two squares. Since the minimal case represents the least ratio, any other case requires a greater ratio than ${1 \over 2}$, the subset \textbf{L} of \textbf{S} is impossible.
\end{proof}

By assuming the existence of the set \textbf{L} we have assumed the \textit{finitude} of primes of the form $4u^2  + 1$. But the set \textbf{ L} has been shown to be impossible, hence there must be an infinitude of primes of the form $4u^2  + 1$.

\subsection*{Acknowledgements}
My sincere thanks to Frederick W. Stevenson of the University of Arizona Mathematics Department for his many helpful suggestions, and Aeyn Edwards-Wheat for his never ending encouragement and his many editorial suggestions.


\begin{thebibliography}{HD}

\normalsize
\baselineskip=17pt



\bibitem[1]{1} E. Bolker, \emph{Elementary Number Theory: An Algebraic Approach.} Dover. (New York)
2007, 117--119.

\bibitem[2]{2} E. Grosswald, \emph{Representations of Integers as Sums of Squares.} Springer-Verlag. (New York)
1985, 20--22.

\bibitem[3]{3} R. Guy, \emph{Unsolved Problems in Number Theory.} Springer-Verlag. (New York)
1994. 7--8, 240--241.

\bibitem[4]{4} F. Stevenson, \emph{Exploring the Real Numbers.} Prentice Hall. (New Jersey)
2000, 122--125, 114--117.

\bibitem[5]{5} H. Hardy and E. M. Wright, \emph{Introduction to the Theory of Numbers, 3rd ed.}  Oxford-Clarendon. (Oxford)
1954, 18--19, 263, 270-271.

\end{thebibliography}
\end{document}